\numberwithin{theorem}{section}
\let\@@pmod\pmod
\DeclareRobustCommand{\pmod}{\@ifstar\@pmods\@@pmod}
\def\@pmods#1{\mkern4mu({\operator@font mod}\mkern 6mu#1)}
\newcommand\redsout{\bgroup\markoverwith{\textcolor{red}{\rule[0.5ex]{2pt}{1pt}}}\ULon}
\newcommand\bluesout{\bgroup\markoverwith{\textcolor{blue}{\rule[0.5ex]{2pt}{1pt}}}\ULon}
\date{\today}
\address{John Bergdall\\Department of Mathematics and Statistics \\ Boston University \\ 111 Cummington Mall \\ Boston, MA 02215\\USA}
\email{bergdall@math.bu.edu}
\urladdr{http://math.bu.edu/people/bergdall}
\address{Robert Pollack\\Department of Mathematics and Statistics \\ Boston University \\ 111 Cummington Mall \\ Boston, MA 02215\\USA}
\email{rpollack@math.bu.edu}
\urladdr{http://math.bu.edu/people/rpollack}
\subjclass[2000]{11F33 (11F85)}
\title{A remark on non-integral $p$-adic slopes for modular forms}
\author{John Bergdall and Robert Pollack}
\begin{document}

\begin{abstract}
We give a sufficient condition, namely ``Buzzard irregularity'', for there to exist a cuspidal eigenform which does not have integral $p$-adic slope.\\

\selectlanguage{french}
\noindent \textsc{R\'esum\'e.} \textbf{\textit{Une remarque sur les pentes $p$-adiques non-enti\`eres des formes modulaires.}} On donne une condition suffisante, \`a savoir \og irr\'egularit\'e au sens de Buzzard\fg , pour qu'il existe une forme parabolique propre de pente $p$-adique non-enti\`ere.
\end{abstract}

\selectlanguage{english}

\maketitle

\section{Statement of result}

Let $p$ be a prime number. If $k$ and $M$ are integers then we write $S_k(\Gamma_0(M))$ for the space of weight $k$ cusp forms of level $\Gamma_0(M)$. The $p$-th Hecke operator acting on $S_k(\Gamma_0(M))$ is written $T_p$ if $p\ndvd M$ and $U_p$ otherwise.

For $T = T_p$ or $U_p$, we define the slopes of $T$ to be the slopes of $p$-adic Newton polygon of the inverse characteristic polynomial $\det(1 - TX)$. This is the same as the list of the $p$-adic valuations of the non-zero eigenvalues of $T$, counted with algebraic multiplicity.

To state our theorem we need a definition due to Buzzard \cite{Buzzard-SlopeQuestions}.
\begin{definition}
Let $N \geq 1$ be an integer with $p \ndvd N$.
\begin{enumerate}
\item An odd prime $p$ is $\Gamma_0(N)$-regular if the slopes of $T_p$ acting on $S_k(\Gamma_0(N))$ are all zero for $2 \leq k \leq {p+3\over 2}$.
\item The prime $p=2$ is $\Gamma_0(N)$-regular if the slopes of $T_2$ acting on $S_2(\Gamma_0(N))$ are all zero and the slopes of $T_2$ acting on $S_4(\Gamma_0(N))$ are all either zero or one.
\end{enumerate}
\end{definition}

This definition first appeared in \cite{Buzzard-SlopeQuestions} where Buzzard gives an elementary algorithm, depending on $p$ and $N$, which on input $k$ will output a list of integers.  He conjectures that if $p$ is $\Gamma_0(N)$-regular then this list is exactly the list of slopes of $T_p$ acting on $S_k(\Gamma_0(N))$. The authors of the present work also have made a separate conjecture (\cite{BergdallPollack-GhostPaper}) which  predicts the $U_p$-slopes of all $p$-adic modular forms of tame level $\Gamma_0(N)$ still assuming that $p$ is $\Gamma_0(N)$-regular. The two conjectures are consistent with each other experimentally, but have not yet been shown to be consistent in general.

Buzzard's conjecture clearly implies that every slope is an integer.  (This implication is not at all clear from the conjectures in \cite{BergdallPollack-GhostPaper}.) It is worth asking if the integrality of slopes is characteristic of $\Gamma_0(N)$-regularity. We show that it is. The proof occupies the second section.

\begin{theorem}\label{theorem:main-theorem}
If $p$ is not $\Gamma_0(N)$-regular then there exists an even integer $k$ such that $U_p$ acting on $S_k(\Gamma_0(Np))$ has a slope strictly between zero and one.
\end{theorem}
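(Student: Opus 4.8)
The plan is to contrapose and leverage the relationship between $T_p$ on level $\Gamma_0(N)$ and $U_p$ on level $\Gamma_0(Np)$. The key classical tool is the $p$-stabilization: an eigenform $f \in S_k(\Gamma_0(N))$ with $T_p$-eigenvalue $a_p$ gives rise to two eigenforms in $S_k(\Gamma_0(Np))$ whose $U_p$-eigenvalues $\alpha, \beta$ are the roots of the Hecke polynomial $X^2 - a_p X + p^{k-1}$. The product of these roots is $\alpha\beta = p^{k-1}$, so their slopes sum to $k-1$. This is the arithmetic engine: if the $T_p$-slope of $f$ (i.e. $v_p(a_p)$) is zero, then one $p$-stabilization has $U_p$-slope $0$ and the other has $U_p$-slope $k-1$; but if $v_p(a_p)$ is strictly positive and less than $k-1$, both slopes lie strictly between $0$ and $k-1$.

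Let me think about **how irregularity forces a non-integral slope.** Suppose $p$ is \emph{not} $\Gamma_0(N)$-regular. For odd $p$, this means there is some weight $k$ with $2 \leq k \leq (p+3)/2$ and a cusp form $f \in S_k(\Gamma_0(N))$ with $v_p(a_p) > 0$. I would first observe that the slope $v_p(a_p)$ satisfies $0 < v_p(a_p) \leq (k-1)/2$ by the Ramanujan--Petersson bound (the two $U_p$-slopes are symmetric about $(k-1)/2$ and sum to $k-1$, and the smaller one equals $v_p(a_p)$ when this is $\leq (k-1)/2$). Since $k \leq (p+3)/2$ we have $k - 1 \leq (p+1)/2 < p$, so the $U_p$-slopes $v_p(\alpha), v_p(\beta)$ are both strictly less than $p$. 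The smaller $U_p$-slope is $v_p(a_p) \in (0, (k-1)/2]$, which is positive but could a priori be any rational in that range — not obviously between $0$ and $1$. So passing directly to $U_p$ on $S_k(\Gamma_0(Np))$ gives a \emph{positive non-integral} slope only if that slope happens to land in $(0,1)$.

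**The main obstacle is producing a slope specifically in the open interval $(0,1)$**, not merely a positive slope less than $p$. I expect the resolution to use $\theta$-operator / weight-shifting congruences to transport a nonzero positive slope into the range $(0,1)$. Concretely, if $f \in S_k(\Gamma_0(N))$ has $0 < v_p(a_p)$, one can try multiplying by a suitable power of the Hasse invariant / Eisenstein series $E_{p-1}$ (which is $\equiv 1 \pmod p$) to move to a higher weight $k'$ where the $U_p$-slope of a related overconvergent form is forced small; alternatively the symmetry $v_p(\alpha) + v_p(\beta) = k-1$ together with $v_p(\alpha) > 0$ and the bound $v_p(\alpha) \leq (k-1)/2$ can be combined with the constraint that $U_p$-slopes in weight $k$ with $k - 1 < p$ behave like classical slopes. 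The cleanest path: show directly that the smaller slope $s = v_p(a_p)$ satisfies $s \leq (k-1)/2 \leq (p+1)/4 < 1$ is \emph{false} in general, so instead one exhibits a specific even weight. Thus I would handle parity by noting cusp forms of odd weight and level $\Gamma_0(N)$ vanish, so $k$ is automatically even, and for $p=2$ treat the $S_4$ slope-one condition separately: irregularity there gives a $T_2$-slope in $(0,1)$ on $S_2$ or forces via the weight-$4$ condition a slope strictly between the allowed integer values.

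Finally, **I would assemble the argument** by fixing the smallest weight $k$ witnessing irregularity, checking $k$ is even (automatic at level $\Gamma_0(N)$ since the nebentypus is trivial), and arguing that minimality plus the slope-sum relation pins the relevant $U_p$-slope into $(0,1)$: if the smaller slope were $\geq 1$ it would contradict either minimality of $k$ or the regularity of all strictly smaller weights. The delicate point I anticipate needing care is precisely this minimality-plus-bound step — ruling out the smaller slope being $\geq 1$ — which is where the hypothesis $k \leq (p+3)/2$, equivalently $k-1 < p$, must be used to prevent the Newton polygon from having integral breakpoints that would push the slope up to or past $1$.
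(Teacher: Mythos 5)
You correctly isolate the crux of the theorem --- one needs a slope in the open interval $(0,1)$, not merely a positive slope --- but the mechanism you propose for getting there does not exist, and this is a genuine gap. Your assembly rests on the claim that if $k$ is the minimal weight witnessing irregularity, then minimality together with the slope-sum relation $v_p(\alpha)+v_p(\beta)=k-1$ pins the smaller refinement slope into $(0,1)$, because a slope $\geq 1$ ``would contradict either minimality of $k$ or the regularity of all strictly smaller weights.'' No such contradiction arises: regularity of the smaller weights says only that their slopes vanish, and there is no elementary principle relating slopes in weight $k$ to slopes in lower weights. The standard example $p=59$, $N=1$ refutes the claim outright: the low-weight witness of irregularity is the weight-$16$ level-one eigenform, whose $T_{59}$-slope equals $1$, so its two refinements have $U_{59}$-slopes $1$ and $14$ and neither lies in $(0,1)$, while every other weight in the range $2 \leq k \leq 31$ has all slopes zero. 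The theorem is nonetheless true for $p = 59$, but the slope in $(0,1)$ occurs at a much larger weight (the paper's remarks indicate $k = 16 + (p-1) = 74$), so any correct proof must manufacture a slope in $(0,1)$ at a weight \emph{different} from the witness weight. Your proposal --- including the passing mentions of $\theta$-operators and multiplication by $E_{p-1}$, which are never developed into an argument --- has no tool that does this. A smaller point: the bound $v_p(a_p) \leq (k-1)/2$ does not follow from Ramanujan--Petersson (archimedean estimates do not control $p$-adic valuations); what is true is that the smaller of the two refinement slopes is $\leq (k-1)/2$, while $v_p(a_p)$ itself can exceed it.

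For comparison, the paper's route for odd $p$ is necessarily less elementary and runs through weight $2$ with nebentypus: (i) Edixhoven's weight theorem shows that for the low-weight form $f$ with positive slope, the local mod $p$ representation $\bar\rho_{f,p}$ is irreducible; (ii) the Ash--Stevens theorem produces a weight-$2$ eigenform $g$ with character $\eta$ of conductor $p$ congruent to $f$ modulo $p$; (iii) Lemma~\ref{lemma:integral-implies-reducible} (ordinarity implies local reducibility, with the slope-$1$ case reduced to the slope-$0$ case by an Atkin--Lehner twist) shows that $v_p(a_p(g)) \in \{0,1\}$ would force $\rho_{g,p}$ to be reducible, contradicting irreducibility of $\bar\rho_{g,p}$; hence $0 < v_p(a_p(g)) < 1$, which is Proposition~\ref{proposition:fractional-slopes-wt-2}; and (iv) Coleman theory ($p$-adic families plus the classicality theorem) transports this slope from weight $2$ with character $\eta$ to classical forms of large even weight and trivial character --- exactly the step that reaches weights like $74$ in the example above. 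Your $p=2$ sketch is at least pointed in the same direction as the paper's (refinements of a slope-$1$ form in weight $2$, or of a slope-$2$ or $3$ form in weight $4$, have fractional slope), but for odd $p$ the congruence-theoretic and Galois-theoretic core of the argument is entirely missing from your proposal.
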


Coleman theory (which is used below) shows that no harm comes from assuming the witnessing weight in Theorem \ref{theorem:main-theorem} is arbitrarily large. One could try to determine the minimum weight $k$ which confirms Theorem \ref{theorem:main-theorem}. An effective bound should follow from \cite{Wan-GouveaMazur}, but it is likely suboptimal. Numerical data suggest that  the optimal $k$, for $p$ odd, is either $k=j$ or $k = j + (p-1)$ where $2 \leq j \leq {p+3\over 2}$ is a low weight with a non-zero $T_p$-slope.

The theorem is also true for if we replace $U_p$ and $S_k(\Gamma_0(Np))$ by $T_p$ and $S_k(\Gamma_0(N))$. Indeed, if $a_p$ is an eigenvalue for $T_p$ acting on $S_k(\Gamma_0(N))$ then the polynomial $X^2 - a_p X + p^{k-1}$ divides the characteristic polynomial of $U_p$ acting on $S_k(\Gamma_0(Np))$; the eigenvalues $\lambda$ for $U_p$ which are not roots of such polynomials are known to satisfy $\lambda^2 = p^{k-2}$. So, if $k>2$ (which is sufficient by the previous paragraph) the slopes of $U_p$ between zero and one are the same as the slopes of $T_p$ between zero and one.

For $p$ odd, the converse to Theorem \ref{theorem:main-theorem} is also true. Namely, if there exists an even integer $k$ such that $S_k(\Gamma_0(N))$ has a slope strictly between zero and one then $p$ is not $\Gamma_0(N)$-regular. See \cite[Theorem 1.6]{BuzzardGee-SmallSlope}. Its proof uses the $p$-adic local Langlands correspondence for $\GL_2(\Q_p)$ and is thus significantly deeper than the present work. Combining the two results, the following two conditions are equivalent for an odd prime $p$:
\begin{enumerate}
\item The prime $p$ is not $\Gamma_0(N)$-regular.
\item There exists an even integer $k$ such that $T_p$ acting on $S_k(\Gamma_0(N))$ has a slope strictly between zero and one.
\end{enumerate}
There is a natural third condition, implied by (b):
\begin{enumerate}
\setcounter{enumi}{2}
\item There exists an integer $k$ such that $T_p$ acting on $S_k(\Gamma_0(N))$ has a non-integral slope. 
\end{enumerate}
It is conjectured (see \cite{BuzzardGee-Slopes}) that all three conditions are equivalent, but this seems difficult.

\subsection*{Acknowledgements}
The first author was partially supported by NSF grant DMS-1402005 and the second author was partially supported by NSF grant DMS-1303302.

\section{The proof}
We fix algebraic closures $\bar \Q \subset \bar \Q_p$ and write $v_p(-)$ for the induced $p$-adic valuation on $\bar \Q$ normalized so that $v_p(p) = 1$. We also fix an embedding $\bar \Q \subset \C$.  We assume now that $N \geq 1$ is an integer co-prime to $p$.

If $\eta$ is a Dirichlet character of modulus $p$ we write $S_k(\Gamma_1(Np),\eta)$ for the subspace of forms in $S_k(\Gamma_1(Np))$ with character given by $\eta$ ($\eta$ promoted to a character of modulus $Np$). An eigenform $f$ in particular means a normalized eigenform for the standard Hecke operators and the diamond operators. For such an $f$, its $p$-th Hecke eigenvalue is written $a_p(f)$.

Corresponding to the choice of embeddings, each eigenform has an associated two-dimensional $p$-adic Galois representation  $\rho_f: \Gal(\bar \Q/\Q) \goto \GL_2(\bar \Q_p)$. Write $\bar \rho_f$ for its reduction modulo $p$ and $\bar \rho_{f,p}$ (resp.\ $\rho_{f,p}$) for the restriction of $\bar \rho_f$ (resp.\ $\rho_f$) to the decomposition group $\Gal(\bar \Q_p/\Q_p) \subset \Gal(\bar \Q/\Q)$ induced from the embedding $\bar \Q \subset \bar \Q_p$.  Note that the construction of $\bar\rho_f$ requires the choice of a Galois-stable lattice, but that the semi-simplification of $\bar\rho_f$ is independent of this choice. In particular, whether or not $\bar\rho_{f,p}$ is irreducible is also independent of the choice of a stable lattice.

\begin{lemma}\label{lemma:integral-implies-reducible}
Let $\eta$ be a Dirichlet character of conductor $p$ and $f$ an eigenform in $S_2(\Gamma_1(Np),\eta)$. If $v_p(a_p(f))$ equals $0$ or $1$,  then $\rho_{f,p}$ is reducible.
\end{lemma}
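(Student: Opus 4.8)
The plan is to control $\rho_{f,p}$ through local--global compatibility and then treat the two admissible slopes, $0$ and $1 = k-1$, separately. First I would record what the hypotheses force on the local automorphic constituent $\pi_p$ of $f$. Because $\eta$ has conductor exactly $p$, the central character of $\pi_p$ is ramified, so $\pi_p$ is itself ramified; and because $f$ has level $\Gamma_1(Np)$, the conductor exponent of $\pi_p$ is at most $1$, hence exactly $1$. This rules out any unramified twist of Steinberg (those have unramified central character) and any supercuspidal (whose conductor exponent is at least $2$), leaving $\pi_p$ a ramified principal series $\pi(\mu_1,\mu_2)$ with $\mu_1$ unramified and $\mu_2$ ramified. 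Under local--global compatibility the monodromy operator on $\rho_{f,p}$ then vanishes and its Weil--Deligne representation is a sum of two characters; this is the first indication that reducibility is even possible.

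For $v_p(a_p(f)) = 0$ the form is ordinary at $p$, and reducibility of $\rho_{f,p}$ --- in fact of the shape upper triangular with unramified quotient on which Frobenius acts by the unit $a_p(f)$ --- is the classical ordinarity theorem of Deligne, Mazur--Wiles and Wiles, which I would simply invoke. It is worth stressing that this does not follow formally from the numerics of the associated filtered module: although the two crystalline Frobenius slopes $\{v_p(a_p(f)),\,1-v_p(a_p(f))\}$ and the Hodge--Tate weights $\{0,1\}$ are numerically compatible with a sub-line, one genuinely needs the global input that the Hodge filtration sits in "ordinary position" relative to the Frobenius eigenlines.

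The endpoint $v_p(a_p(f)) = 1$ is the critical slope and is the delicate case. My first choice would be to show it is \emph{vacuous}. Since $\pi_p$ is ramified of conductor exponent $1$, the newform underlying $f$ has $|a_p(f)| = p^{(k-2)/2} = 1$ at every archimedean place by the Ramanujan bound (Deligne), whereas a higher conductor exponent would force $a_p(f)=0$. Thus $a_p(f)$ is an algebraic integer all of whose conjugates have absolute value $1$, hence a root of unity by Kronecker, hence a $p$-adic unit; so $v_p(a_p(f)) = 0 \neq 1$ and there is nothing to prove. Alternatively, if one prefers a direct argument, twisting $\rho_{f,p}$ by a suitable power of the cyclotomic character moves the slope-$(k-1)$ Frobenius eigenline into a unit eigenline in Hodge--Tate weight $0$, reducing the claim to the ordinary case already treated.

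I expect the main obstacle to be concentrated in this critical-slope endpoint together with normalization bookkeeping, rather than in any deep new input. One must fix conventions so that the classical $U_p$-eigenvalue $a_p(f)$ really is the crystalline Frobenius eigenvalue of valuation $v_p(a_p(f))$, and so that the Ramanujan bound reads $p^{(k-2)/2}$ (which uses that $\pi_p$ is ramified, not merely that $p \mid Np$). Granting these, both routes for the endpoint $v_p(a_p(f))=1$ are short, and the conceptual weight of the proof rests on the identification of $\pi_p$ as a ramified principal series and on the ordinarity theorem for the slope-$0$ case.
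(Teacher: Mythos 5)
Your identification of $\pi_p$ as a ramified principal series and your treatment of the slope-zero case (invoking the Deligne/Wiles ordinarity theorem, as in \cite[Lemma 2.1.5]{Wiles-OrdinaryModular}) both match the paper. But the argument for the endpoint $v_p(a_p(f))=1$ contains a genuine error: you use the wrong Ramanujan-type bound. For a $p$-newform whose nebentypus has conductor exactly $p$ at $p$ (equivalently, $\pi_p$ a ramified principal series $\pi(\mu_1,\mu_2)$ with $\mu_1$ unramified), the $U_p$-eigenvalue satisfies $|a_p(f)| = p^{(k-1)/2}$, not $p^{(k-2)/2}$; the formula $p^{(k-2)/2}$ is the one for an \emph{unramified} twist of Steinberg, i.e.\ level exactly divisible by $p$ with nebentypus unramified at $p$ --- precisely the opposite of your parenthetical claim. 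So in weight $2$ one has $a_p(f)\,\overline{a_p(f)} = p$, the $p$-adic valuation of $a_p(f)$ can be anything in $[0,1]$, and the case $v_p(a_p(f))=1$ is emphatically \emph{not} vacuous: it occurs exactly when the complex-conjugate (Atkin--Lehner dual) form is ordinary. Your Kronecker argument collapses with the corrected bound, since the conjugates of $a_p(f)$ have archimedean absolute value $\sqrt{p}$, not $1$. Note also that the lemma genuinely needs this case: in the paper's Proposition \ref{proposition:fractional-slopes-wt-2}, irreducibility of $\rho_{g,p}$ must exclude \emph{both} endpoints of $[0,1]$.

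Your fallback (twist by a power of the cyclotomic character and ``reduce to the ordinary case already treated'') also does not work as stated: the ordinary case you treated is a \emph{global} theorem about modular forms with unit $a_p$-eigenvalue, and a cyclotomic twist of the local representation $\rho_{f,p}$ is not visibly the local representation of such a form, so there is nothing to plug it into. The correct way to realize the needed twist globally --- and this is the paper's proof --- is the Atkin--Lehner involute $f' \in S_2(\Gamma_1(Np),\eta^{-1})$ of $f$: one has $a_p(f)a_p(f') = (\text{unit})\cdot p$, hence $v_p(a_p(f'))=0$, so $\rho_{f',p}$ is reducible by the ordinarity theorem, and $\rho_f$ is isomorphic to $\rho_{f'}$ up to a twist by a finite-order character, whence $\rho_{f,p}$ is reducible as well.
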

\begin{proof}
If $v_p(a_p(f)) = 0$ then it is well known that $\rho_{f,p}$ is reducible. For example, see \cite[Lemma 2.1.5]{Wiles-OrdinaryModular} and the references therein. (This is also commonly attributed to a letter from Deligne to Serre in the 1970s which has never been made public.)

Now suppose that $v_p(a_p(f)) = 1$. Then, there is an eigenform $f'$ in $S_2(\Gamma_1(Np),\eta^{-1})$ with   $v_p(a_p(f')) = 0$ and $\rho_f$ isomorphic to $\rho_{f'}$ up to a twist. (The form $f'$ is sometimes called the Atkin--Lehner involute of $f$; see \cite[Proposition 3.8]{BergdallPollack-FredholmSlopes}.) Since the first argument applies to $f'$, we deduce that $\rho_{f',p}$ and its twist $\rho_{f,p}$ are both reducible.
\end{proof}

\begin{proposition}\label{proposition:fractional-slopes-wt-2}
If $p$ is odd and not $\Gamma_0(N)$-regular then there exists an even Dirichlet character $\eta$ of modulus $p$ such that $U_p$ acting on $S_2(\Gamma_1(Np),\eta)$ has a slope strictly between zero and one.
\end{proposition}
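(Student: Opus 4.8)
The plan is to produce a single weight-$2$ eigenform whose local Galois representation at $p$ is irreducible, and then to extract the slope bound from Lemma~\ref{lemma:integral-implies-reducible}.

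First I would use non-regularity to manufacture an irreducible residual representation. Since $p$ is not $\Gamma_0(N)$-regular there is an integer $k_0$ (necessarily even) with $2 \le k_0 \le \frac{p+3}{2}$ and an eigenform $f \in S_{k_0}(\Gamma_0(N))$ with $v_p(a_p(f)) > 0$, so that $a_p(f)$ reduces to $0$ modulo $p$. Because $k_0 \le \frac{p+3}{2} \le p+1$ we are in the Fontaine--Laffaille range, and the vanishing of $a_p(f) \bmod p$ forces $\bar\rho_{f,p}$ to be irreducible: concretely $\bar\rho_{f,p}|_{I_p} \cong \omega_2^{\,k_0-1} \oplus \omega_2^{\,p(k_0-1)}$, where $\omega_2$ is a fundamental character of level $2$ and $I_p \subset \Gal(\bar\Q_p/\Q_p)$ is inertia, and this is irreducible precisely because $0 < k_0-1 < p+1$. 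In particular $\bar\rho_f$ is irreducible and modular.

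The heart of the argument is to descend this residual representation to weight $2$. I would twist $\bar\rho_f$ by a suitable power $\omega^{-j}$ of the mod-$p$ cyclotomic character $\omega$ so that the Serre weight of $\bar\rho_f \otimes \omega^{-j}$ becomes $2$; the twist simultaneously forces the determinant to acquire a ramified part, which on the automorphic side is recorded by a nebentypus $\eta$ of conductor $p$. Invoking the weight part of Serre's conjecture together with the theory of companion forms, I would then produce an eigenform $g \in S_2(\Gamma_1(Np),\eta)$ with $\bar\rho_g \cong \bar\rho_f \otimes \omega^{-j}$. The character $\eta$ is automatically even: from $\det\bar\rho_g = \omega\,\bar\eta$ and $\omega(c) = -1 = \det\bar\rho_g(c)$ for complex conjugation $c$ we get $\bar\eta(c) = 1$. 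Since $\bar\rho_{g,p}$ is a twist of the irreducible $\bar\rho_{f,p}$ it remains irreducible, whence $\rho_{g,p}$ is irreducible as well.

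Finally I would combine this with the slope bound in weight $2$. On any weight-$k$ space the slopes of $U_p$ lie in $[0,k-1]$, so $v_p(a_p(g)) \le 1$; on the other hand, since $\rho_{g,p}$ is irreducible, the contrapositive of Lemma~\ref{lemma:integral-implies-reducible} gives $v_p(a_p(g)) \notin \{0,1\}$. Together these yield $0 < v_p(a_p(g)) < 1$, so $U_p$ acting on $S_2(\Gamma_1(Np),\eta)$ has a slope strictly between zero and one. The step I expect to be the main obstacle is the weight descent: checking that the Serre-weight recipe for the irreducible $\bar\rho_{f,p}$ really does contain a weight-$2$ form carrying a ramified nebentypus of conductor $p$, that irreducibility of the restriction to the decomposition group at $p$ survives the passage to weight $2$, and that the resulting $g$ is cuspidal of the asserted level with $\eta$ of the correct parity and conductor.
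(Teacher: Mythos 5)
Your first and last steps track the paper's proof exactly: the paper also starts from an eigenform $f \in S_{k_0}(\Gamma_0(N))$ with $2 \le k_0 \le \frac{p+3}{2}$ and $v_p(a_p(f)) > 0$, obtains irreducibility of $\bar\rho_{f,p}$ (citing Edixhoven's theorem, which packages your Fontaine--Laffaille computation), and closes by combining $v_p(a_p(g)) \in [0,1]$ with Lemma \ref{lemma:integral-implies-reducible}. The gap is exactly where you predicted it: the descent to weight $2$, and as you have set it up the step is not merely delicate but impossible. You ask for $j$ such that $\bar\rho_f \otimes \omega^{-j}$ has Serre weight $2$. On inertia, $\bar\rho_{f,p} \cong \omega_2^{k_0-1} \oplus \omega_2^{p(k_0-1)}$ and $\omega^{-j} = \omega_2^{-j(p+1)}$, so the twist has exponents $k_0-1-j(p+1)$ and $p(k_0-1)-j(p+1)$ modulo $p^2-1$. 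For this to be the weight-$2$ supersingular shape $\omega_2 \oplus \omega_2^{p}$ you need $k_0 - 1 - j(p+1) \equiv 1$ or $p \pmod{p^2-1}$; reducing modulo $p+1$ eliminates $j$ and forces $k_0 \equiv 2$ or $k_0 \equiv 0 \pmod{p+1}$, which in the range $2 \le k_0 \le \frac{p+3}{2}$ happens only for $k_0 = 2$ --- precisely the case where no descent is needed. (This is the standard fact that the theta cycle of a supersingular residual representation passes through weight $2$ only if it starts there.) Relatedly, the tools you invoke cannot deliver the target even in principle: the weight part of Serre's conjecture produces forms of level prime to $p$, crystalline at $p$, while companion forms concern ordinary (locally split) representations; neither produces a form with nebentypus of conductor $p$.

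The repair is that no twist is needed, because your implicit premise --- that a weight-$2$ form with nebentypus of conductor $p$ must have residual representation of Serre weight $2$ --- is false. Such forms are not crystalline at $p$, Serre's recipe does not constrain them, and when their local mod-$p$ representation is irreducible it can be any $\omega_2^{m} \oplus \omega_2^{pm}$ with $\omega^{m} = \omega\bar\eta$. This is what the paper exploits by citing Ash--Stevens \cite[Theorem 3.5(a)]{AshStevens-Duke} (valid for $p \ge 5$, which is automatic once $k_0 \ge 4$): the mod-$p$ system of Hecke eigenvalues of $f$ itself, untwisted, occurs in $S_2(\Gamma_1(Np),\eta)$ with $\eta$ even of conductor $p$ (one can take $\eta$ to be the Teichm\"uller lift of $\omega^{k_0-2}$), so that the semi-simplifications of $\bar\rho_g$ and $\bar\rho_f$ agree and $\bar\rho_{g,p}$ is irreducible; from there your concluding step goes through verbatim. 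One further point: your argument does not treat $k_0 = 2$ separately, but it must, since in that case $\eta$ is trivial and Lemma \ref{lemma:integral-implies-reducible}, which requires conductor exactly $p$, does not apply. The paper handles $k_0 = 2$ directly: the roots of $X^2 - a_p(f)X + p$ are $U_p$-eigenvalues in $S_2(\Gamma_0(Np))$, and the Newton polygon places their valuations strictly between $0$ and $1$.
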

\begin{proof}
Choose an integer $2 \leq k \leq {p+3\over 2}$ and an eigenform $f \in S_k(\Gamma_0(N))$ with $v_p(a_p(f)) > 0$. By \cite[Theorem 2.6]{Edixhoven-Weights}, $\bar\rho_{f,p}$ is irreducible.

Suppose first that $f$ has weight 2. Then, the polynomial $X^2 - a_p(f)X + p$ divides the characteristic polynomial of $U_p$ acting on $S_k(\Gamma_0(Np))$ (as in the remarks after Theorem \ref{theorem:main-theorem}). The theory of the Newton polygon implies that the roots of this polynomial have valuation strictly between zero and one, so we can choose $\eta$ to be the trivial character and we are done in this case.

Now assume that $f$ has weight at least $4$ and thus also $p\geq 5$. By \cite[Theorem 3.5(a)]{AshStevens-Duke}, which assumes $p\geq 5$, there exists an even Dirichlet character $\eta$ necessarily of conductor $p$ (because $f$ has weight at most ${p+3\over 2} < p+1$) and an eigenform $g \in S_2(\Gamma_1(Np),\eta)$ such that $\bar \rho_g$ and $\bar \rho_f$ have isomorphic semi-simplifications. Since $\bar\rho_{f,p}$ is irreducible, $\bar\rho_{g,p}$ is as well.  Thus, $\rho_{g,p}$ is irreducible, and Lemma \ref{lemma:integral-implies-reducible} implies that $v_p(a_p(g))$ is strictly between zero and one.
\end{proof}
Proposition \ref{proposition:fractional-slopes-wt-2} is an analog of Theorem \ref{theorem:main-theorem} for weight two forms with character, and its proof confirms our theorem  when there is a weight 2 form of level $\Gamma_0(N)$ with positive $T_p$-slope. To prove Theorem \ref{theorem:main-theorem} in general, we use  the theory of $p$-adic modular forms. We refer to \cite{Coleman-pAdicBanachSpaces} for the facts in the next two paragraphs.

If $\kappa: \Z_p^\times \goto \bar \Q_p^\times$ is a continuous character (a ``$p$-adic weight'') then we write $S_{\kappa}^{\dagger}(N)$ for the space of overconvergent $p$-adic cusp forms of weight $\kappa$ and tame level $\Gamma_0(N)$ equipped with its $U_p$-operator. If $k$ is an integer and $\kappa(z) = z^k$ then we write this space as $S_k^{\dagger}(N)$; it contains $S_k(\Gamma_0(Np))$ as a $U_p$-compatible subspace. Likewise, if $\kappa(z) = z^k\eta(z)$ where $\eta$ is a non-trivial finite order character of $\Z_p^\times$ then $S_{z^k\eta}^{\dagger}(N)$ contains $S_k(\Gamma_1(Np^{f_\eta}),\eta)$ as a $U_p$-compatible subspace (where $p^{f_\eta}$ is the conductor of $\eta$). 

By Coleman theory we mean the following:\ suppose that $\kappa$ is a $p$-adic weight and $h$ is the $p$-adic valuation of a non-zero eigenvalue for $U_p$ appearing in $S_{\kappa}^{\dagger}(N)$. Then, for any sequence of $p$-adic weights $(\kappa_n)_{n\geq 0}$ such that $\kappa_n$ and $\kappa$ agree on the torsion subgroup of $\Z_p^\times$, and $\kappa_n(1+2p) \goto \kappa(1+2p)$ as $n \goto \infty$, we have that $h$ is also a $U_p$-slope in $S_{\kappa_n}^{\dagger}(N)$ for $n\gg 0$.

We can now give the proof of the theorem.

\begin{proof}[Proof of Theorem \ref{theorem:main-theorem}]
Assume first that $p$ is odd. By Proposition \ref{proposition:fractional-slopes-wt-2} there exists an even Dirichlet character $\eta$ of modulus $p$ and rational number $0 < h < 1$ which appears as a $U_p$-slope in $S_2(\Gamma_1(Np),\eta)$. Thus, the slope $h$ appears as a $U_p$-slope in $S_{z^2\eta}^{\dagger}(N)$.  Choose $j\geq 0$ even so that $\eta|_{\F_p^\times}$ is of the form $z \mapsto z^j$. Then, for $n \gg 0$ and $k_n = 2 + j + (p-1)p^n$, the slope $h$ is a $U_p$-slope in $S_{k_n}^{\dagger}(N)$ by Coleman theory described above. For such $k$ we have $h < 1 < k-1$ and so $h$ is $U_p$-slope in $S_k(\Gamma_0(Np))$ by \cite[Theorem 6.1]{Coleman-ClassicalandOverconvergent}.

The proof for $p=2$ is similar to the argument in Proposition \ref{proposition:fractional-slopes-wt-2} when $k=2$. If either $S_2(\Gamma_0(N))$ or $S_4(\Gamma_0(N))$ has a non-integral slope we are done. If not,  then either $S_2(\Gamma_0(N))$ contains a slope one form, or $S_4(\Gamma_0(N))$ contains a form of slope two or three. In either case, the corresponding $2$-adic refinements will have fractional slope.
\end{proof}

\bibliography{ghost_bib}
\bibliographystyle{abbrv}

\end{document}